\title{Unusual spectral categories}
\author{Mar\'{i}a Jos\'{e} Arroyo Paniagua}
	\address[mariajose.mja@gmail.com]{Departamento de Matem\'{a}ticas,  Universidad Aut\'{o}noma Metropolitana - Iztapalapa, Mexico, M\'{e}xico}
\author{Alberto Facchini}
\address[facchini@math.unipd.it]{Dipartimento di Matematica ``Tullio Levi-Civita'', Universit\`{a} di Padova, Via Trieste 63, 35121 Padova, Italy}
\author{Marino Gran}
\address[marino.gran@uclouvain.be]{Universit\'{e} catholique de Louvain, Institut de Recherche en Math\'{e}matique et Physique, Chemin du Cyclotron 2, 1348 Louvain-la-Neuve, Belgique}
	\author{George Janelidze}
	\address[george.janelidze@uct.ac.za]{Department of Mathematics and Applied Mathematics, University of Cape Town, Rondebosch 7700, South Africa}
 \tikzset{mynode/.style={draw,circle,inner sep=1pt,outer sep=0pt}}
\newtheorem{theorem}{Theorem}[section]
\newtheorem{proposition}[theorem]{Proposition}
\newtheorem{condition}[theorem]{Condition}
\newtheorem{definition}[theorem]{Definition}
\newtheorem{remark} [theorem]{Remark}
\newtheorem{example}[theorem]{Example}
\begin{document}

 \begin{abstract}
		The paper is devoted to a kind of `very non-abelian' spectral categories. Under strong conditions on a category $\mathcal{X}$, we prove, among other things, that, for a given faithful localization $\mathcal{C}\to\mathcal{X}$, we have canonical equivalences $\mathrm{Spec}(\mathcal{C})\sim\mathcal{X}\sim(\mathrm{Category\,\,of\,\,injective\,\,objects\,\,in}\,\, \mathcal{C})$, and that $\mathcal{C}$ has natural injective envelopes.  
	\end{abstract}
	\thanks{The second author is partially supported by Ministero dell'Istruzione, dell'Universit\`a e della Ricerca (Progetto di ricerca di rilevante interesse nazionale ``Categories, Algebras: Ring-Theoretical and Homological Approaches (CARTHA)'') and Dipartimento di Mate\-ma\-tica ``Tullio Levi-Civita'' of Universit\`a di Padova (Research program DOR1828909 ``Anelli e categorie di moduli''). The fourth author was partially supported by South African NRF
}
\smallskip
\keywords{Spectral category, essential monomorphism, localization, essential localization, injective object, projective object, injective envelope, projective cover, bimorphism, balanced category, topological functor. }
\subjclass[2010]{18A20, 18A40, 18B30, 18G05} 

	\maketitle
	





	\section{Introduction}
	
	\vspace{-.1in}
		Consider the passage 
	\usetikzlibrary{arrows,
		chains}
	
	\tikzset{
		decision/.style = {draw},
		line/.style = {draw},
		block/.style = {rectangle, draw,  text width=10 em, minimum height=10 mm,
			align=center}
	}
	\makeatletter
	\tikzset{suspend join/.code={\def\tikz@after@path{}}}
	\makeatother
	
	\vspace{-.1in}
		\begin{figure}[htbp]\small
			\begin{tikzpicture}[thick,
			node distance = 0ex and 3em,
			start chain = A going right,
			every join/.style = {draw, -stealth, thick},
			block/.append style = {on chain=A, join}
			]
			\node [block]   {A Grothendieck category $\mathcal{C}$};
			\node [block]    {The Gabriel-Oberst spectral category $\mathrm{Spec}(\mathcal{C})$ of $\mathcal{C}$ \cite{[GO]}};
			\node [block]   {The collection of isomorphism classes of injective objects in $\mathcal{C}$};
			\end{tikzpicture}
		\end{figure}

\vspace{-.1in}
	\noindent and observe:
	
	\begin{itemize}
		\item [(i)] On the one hand $\mathrm{Spec}(\mathcal{C})$ has the same objects as $\mathcal{C}$, and on the other hand there is a canonical bijection between the collection of isomorphism classes of objects of $\mathrm{Spec}(\mathcal{C})$ and the collection of isomorphism classes of injective objects of $\mathcal{C}$ \cite{[GO]}.
		\item [(ii)] The canonical functor $P_{\mathcal{C}}:\mathcal{C}\to\mathrm{Spec}(\mathcal{C})$ has the property that, for objects $A$ and $B$ in $\mathcal{C}$, $P(A)$ is isomorphic to $P(B)$ if and only if $A$ and $B$ have isomorphic injective envelopes (=injective hulls). This was also shown in \cite{[GO]}.
		\item [(iii)] In spite of the property above, there is no pointed endofunctor $(I,\iota)$ of $\mathcal{C}$ such that, for an object $A$ in $\mathcal{C}$, $\iota_A:A\to I(A)$ is a monomorphism making $I(A)$ an injective envelope of $A$ (unless all objects of $\mathcal{C}$ are injective of course). We could express this fact by saying that $\mathcal{C}$ has no natural injective envelopes. As the title of \cite{[AHRT]} says, ``injective hulls are not natural''. We recalled more about this in \cite{[AFGJ]} with references to  \cite{[F]} and \cite{[GO]}.
	\end{itemize}
	The Gabriel\textendash Oberst construction of $\mathrm{Spec}(\mathcal{C})$, introduced in \cite{[GO]} for Grothendieck categories and considered in the setting of $G$-groups in \cite{[AF]}, was widely generalized in \cite{[AFGJ]}: the only assumption on $\mathcal{C}$ there was that $\mathcal{C}$ admits finite limits. It was achieved by inverting only pullback stable essential monomorphisms instead of inverting all essential monomorphisms; in the abelian case this does not change anything of course. In fact, even more generally, the construction given in \cite{[AFGJ]} involves a chosen class of monomorphisms in $\mathcal{C}$, but we will not go that far in the present paper.
	
	While the story of injective objects was not developed in \cite{[AFGJ]}, now we are interested in the following question: are there `strange', surely non-abelian, situations, in which $\mathrm{Spec}(\mathcal{C})$ is not equivalent to $\mathcal{C}$, and yet $\mathcal{C}$ has natural injective envelopes? Also note that the main result of \cite{[AHRT]} suggests not just to avoid abelian cases, but to make sure to have a lot of non-extremal monomorphisms.
	
	Well, here is what seems to be the best trivial example:
	
	\begin{example}
		Let $L$ be a $\wedge$-semilattice (with the largest element $1$), considered as a category. We observe:
		\begin{itemize}
			\item [(a)] Since every morphism in $L$ is a monomorphism, all extremal monomorphisms in $L$ are isomorphisms, and, at the same time, all morphisms in $L$ are pullback stable essential monomorphisms.
			\item [(b)] $L$ has exactly one injective object, namely the largest element $1$, and it is the unique injective envelope for each object. Since every diagram in $L$ commutes, this also means that $L$ has natural injective envelopes.
			\item [(c)] Since $\mathrm{Spec}(L)$ is constructed by inverting pullback stable essential monomorphisms in $L$, it is a trivial category; in particular, it is equivalent (even isomorphic) to the category of injective objects in~$L$. 
		\end{itemize}
	\end{example}   	 
	
	Does this trivial example suggest non-trivial ones?
Rather obviously, thinking, say, of the lattice of topologies on a given set, one should go to the theory of topological categories (see \cite{[B]} and references therein). Thinking this way our aim was first of all to find a fairly general non-abelian context for (not just having natural injective envelopes, but also) having the spectral category $\mathrm{Spec}(\mathcal{C})$ behaving much better than the original category $\mathcal{C}$ itself. And, of course, unlike the abelian case, where ``behaves much better'' means that every short exact sequence splits, what it should mean in the situation we aimed at was unclear to us. 
	
	The desired non-abelian context we found is described in this paper. It is fairly general indeed, although it is not too far from what is considered in \cite{[B]}. Specifically:
	\begin{itemize}
		\item Our first theorem (in Section 2) involves a new notion of generalized fibration (we call it ``weak fibration'').
		\item Then more results need further requirements, which brings us to faithful essential localizations in Section 4.
		\item We introduce extra conditions, under which our last theorems, 6.6 and 7.1, describe what turned out to be the `ideal behavior' of spectral categories; this means several good properties including the canonical equivalences $\mathrm{Spec}(\mathcal{C})\sim\mathcal{X}\sim(\mathrm{The\,\,category\,\,of\,\,injective\,\,objects\,\,in}\,\, \mathcal{C})$, for a given faithful localization $\mathcal{C}\to\mathcal{X}$, with the naturality of injective envelopes in $\mathcal{C}$, and dual properties.  
	\end{itemize}
		
	\textit{Throughout this paper} $\mathcal{C}$ \textit{and} $\mathcal{X}$ \textit{denote categories with finite limits and finite colimits (in fact we need only pullbacks and, considering `dual' situations, pushouts)}.

	\section{Preservation and reflection of essential monomorphisms}
	
	Recall that a monomorphism $m:S\to A$ in a given category is said to be an \textit{essential monomorphism} if a morphism $f:A\to B$ is a monomorphism whenever so is $fm$. From this very definition we immediately obtain:
	
	\begin{proposition}\label{ess-reflection}
		If a functor $F:\mathcal{C}\to\mathcal{X}$ preserves and reflects monomorphisms, then it reflects essential monomorphisms.\qed
	\end{proposition}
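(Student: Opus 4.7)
The plan is to unpack the definitions directly: assume $F(m)$ is an essential monomorphism in $\mathcal{X}$ and verify that $m$ satisfies the two parts of the definition of essential monomorphism in $\mathcal{C}$.

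First, I would observe that $m$ itself is a monomorphism in $\mathcal{C}$: by hypothesis $F(m)$ is a monomorphism (being essential), and since $F$ reflects monomorphisms, $m$ is a monomorphism.

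Next, take any morphism $f:A\to B$ in $\mathcal{C}$ such that $fm$ is a monomorphism; I need to show $f$ is a monomorphism. Since $F$ preserves monomorphisms, $F(f)F(m)=F(fm)$ is a monomorphism in $\mathcal{X}$. Since $F(m)$ is essential in $\mathcal{X}$, this forces $F(f)$ to be a monomorphism. Finally, because $F$ reflects monomorphisms, $f$ is a monomorphism, as required.

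There is no real obstacle here — the statement is an immediate translation of the definition of essential monomorphism through the preservation/reflection hypotheses on $F$, which is exactly why the authors note that it follows ``immediately'' and mark it with \texttt{\textbackslash qed} after the statement.
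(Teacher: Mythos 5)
Your proof is correct and is precisely the ``immediate from the definition'' argument the paper intends by placing \verb|\qed| directly after the statement: reflection gives that $m$ is a monomorphism, and the chain preserve--essential--reflect gives that any $f$ with $fm$ mono is itself mono. Nothing to add.
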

	
	The story of preservation of essential monomorphisms is a bit more complicated:
	
	\begin{definition}
		We will say that a functor $F:\mathcal{C}\to\mathcal{X}$ is:
		\begin{itemize}
			\item [(a)] a \textit{weak fibration} if, for every object $C$ in $\mathcal{C}$ and morphism $u:X\to F(C)$ in $\mathcal{X}$, there exist a morphism $f:U\to C$ in $\mathcal{C}$ and an epimorphism $\varphi:F(U)\to X$ in $\mathcal{X}$ with $F(f)=u\varphi$; we will also say that it is \textit{special} if $\varphi$ above can always be chosen to be an isomorphism;
			\item [(b)] a \textit{weak opfibration} if, for every object $C$ in $\mathcal{C}$ and a morphism $v:F(C)\to Y$, there exist a morphism $g:C\to V$ in $\mathcal{C}$ and a monomorphism $\psi:Y\to F(V)$ in $\mathcal{X}$ with $F(g)=\psi v$;
			\item [(c)] a \textit{weak bifibration} if it is a weak fibration and a weak opfibration at the same time; we will also say that it is \textit{special} if it is (also) a special weak fibration. 
		\end{itemize}
	\end{definition}
	
	\begin{example}
		Every Grothendieck fibration and, more generally, every Street fibration of categories (see \cite{[St]}) is obviously a weak fibration.
	\end{example}
	
	\begin{example}\label{sle}
		Let 
		\begin{equation*}
		(F,G,\eta,\varepsilon):\mathcal{C}\to\mathcal{X}
		\end{equation*}
		be an adjunction whose unit components $\eta_C:C\to GF(C)\,\,(C\in\mathcal{C})$ are pullback stable epimorphisms and whose counit components $\varepsilon_X:FG(X)\to X\,\,(X\in\mathcal{X})$ are epimorphisms. Then $F$ is a weak fibration. Indeed, for an object $C$ in $\mathcal{C}$ and a morphism $u:X\to F(C)$,  we can form the pullback
		\begin{equation*}\xymatrix{C\times_{GF(C)}G(X)\ar[d]_{\pi_1}\ar[r]^-{\pi_2}&G(X)\ar[d]^{G(u)}\\C\ar[r]_-{\eta_C}&GF(C)}
		\end{equation*}
		and then take $f:U\to C$ to be $\pi_1:C\times_{GF(C)}G(X)\to C$ and $\varphi:F(U)\to X$ to be the composite
		\begin{equation*}\xymatrix{F(C\times_{GF(C)}G(X))\ar[r]^-{F(\pi_2)}&FG(X) \ar[r]^-{\varepsilon_X}&X,}
		\end{equation*} 
		where $F(\pi_2)$ and $\varepsilon_X$ are epimorphisms by our assumptions. This example is motivated, of course, by the well-known connection between admissible/semi-left-exactness reflections and fibrations (see \cite{[BCGS]} and \cite{[CGJ]} and references therein for details). In particular, every semi-left-exact reflection (in the sense of \cite{[CHK]}) is a special weak fibration and every essential localization is a special weak bifibration. 
	\end{example}
	
	\begin{theorem}\label{weak-opfib}
		If $F:\mathcal{C}\to\mathcal{X}$ is a weak opfibration that preserves and reflects monomorphisms, then a morphism $m$ in $\mathcal{C}$ is an essential monomorphism if and only if the morphism $F(m)$ is an essential monomorphism in~$\mathcal{X}$.    
	\end{theorem}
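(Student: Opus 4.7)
The plan is to prove the two implications separately, using in both cases the hypothesis that $F$ preserves and reflects monomorphisms (note that in particular $m$ is a monomorphism in $\mathcal{C}$ if and only if $F(m)$ is a monomorphism in $\mathcal{X}$, so it is enough to verify the essentiality condition).

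For the easy direction, assume $F(m):F(S)\to F(A)$ is essential in $\mathcal{X}$, and take any $f:A\to B$ in $\mathcal{C}$ with $fm$ a monomorphism. Since $F$ preserves monomorphisms, $F(f)F(m)=F(fm)$ is a monomorphism in $\mathcal{X}$; essentiality of $F(m)$ gives that $F(f)$ is a monomorphism, and then reflection yields that $f$ is a monomorphism. This direction does not use the weak opfibration hypothesis at all.

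The substantive direction is the converse. Assume that $m:S\to A$ is essential in $\mathcal{C}$, and let $h:F(A)\to Y$ be any morphism in $\mathcal{X}$ such that $hF(m)$ is a monomorphism; we must show $h$ is a monomorphism. This is where the weak opfibration property enters: applied to $C=A$ and $v=h$, it produces a morphism $g:A\to V$ in $\mathcal{C}$ together with a monomorphism $\psi:Y\to F(V)$ satisfying $F(g)=\psi h$. The idea is then to transport the monomorphism test from $\mathcal{X}$ back into $\mathcal{C}$ via $g$: the composite
\begin{equation*}
F(gm) \;=\; F(g)F(m) \;=\; \psi\circ h\circ F(m)
\end{equation*}
is a monomorphism, being a composite of two monomorphisms; reflection of monomorphisms gives that $gm$ is a monomorphism in $\mathcal{C}$, and essentiality of $m$ then forces $g$ itself to be a monomorphism. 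Preservation of monomorphisms turns this into $F(g)=\psi h$ being a monomorphism, from which left-cancellation through $\psi h$ immediately gives that $h$ is a monomorphism, as required.

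The only real obstacle is spotting the right use of the weak opfibration property: the point is that essentiality of $m$ is a condition parametrized by arbitrary morphisms out of $A$ in $\mathcal{C}$, while essentiality of $F(m)$ is parametrized by arbitrary morphisms out of $F(A)$ in $\mathcal{X}$, and the weak opfibration axiom is exactly what is needed to compare these two parameter families (up to a monomorphism $\psi$, which is harmless for monomorphism-testing). Everything else is routine diagram-chasing with the preservation/reflection hypothesis.
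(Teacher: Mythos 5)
Your proof is correct and follows essentially the same route as the paper: the easy direction is the paper's Proposition 2.1 (reflection of essential monomorphisms), and the substantive direction uses the weak opfibration property in exactly the same way, with the same chain of deductions ($F(gm)=\psi h F(m)$ mono $\Rightarrow$ $gm$ mono $\Rightarrow$ $g$ mono $\Rightarrow$ $\psi h$ mono $\Rightarrow$ $h$ mono). If anything, your final cancellation step is stated more carefully than in the paper, which slightly abuses notation by writing $v=F(g)$.
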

	\begin{proof}
		Thanks to Proposition \ref{ess-reflection}, it suffices to prove that $F$ preserves essential monomorphisms. Let $F:\mathcal{C}\to\mathcal{X}$ be a weak opfibration that preserves and reflects monomorphisms, $m:S\to C$ an essential monomorphism in $\mathcal{C}$, and $v:F(C)\to X$ a morphism in $\mathcal{X}$ for which the composite $vF(m):F(S)\to X$ is a monomorphism in $\mathcal{X}$. We observe:
		\begin{itemize}
			\item Since $F$ is a weak opfibration, there exist a morphism $g$ in $\mathcal{C}$ and a monomorphism $\psi:Y\to F(V)$ in $\mathcal{X}$ with $F(g)=\psi v$.
			\item Since $F$ is faithful and $F(gm)=\psi vF(m)$ is a monomorphism in $\mathcal{X}$, $gm$ is a monomorphism in $\mathcal{C}$.
			\item Since $m$ is an essential monomorphism in $\mathcal{C}$, and $gm$ is a monomorphism in $\mathcal{C}$, $g$ is a monomorphism in $\mathcal{C}$.
			\item Since $g$ is a monomorphism in $\mathcal{C}$, and $F$ preserves monomorphisms, $v=F(g)$ is a monomorphism in $\mathcal{X}$.
		\end{itemize}
		This proves that $F(m)$ is an essential monomorphism in $\mathcal{X}$.  
	\end{proof}
		
	\section{Preservation and reflection of pullback stable essential monomorphisms}
	
	Using Proposition \ref{ess-reflection} we obtain:

	\begin{proposition}\label{pres-reflect}
		If a functor $F:\mathcal{C}\to\mathcal{X}$ preserves pullbacks and reflects monomorphisms, then it reflects pullback stable essential monomorphisms.\qed
	\end{proposition}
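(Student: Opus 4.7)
The plan is to reduce the statement to two independent properties: (i) reflection of essential monomorphisms, which is already handled by Proposition \ref{ess-reflection}, and (ii) the fact that the pullback structure of $m$ in $\mathcal{C}$ is faithfully mirrored in $\mathcal{X}$ under $F$.

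First I would note that, although the hypothesis only mentions that $F$ preserves pullbacks and reflects monomorphisms, $F$ automatically preserves monomorphisms as well. Indeed, a morphism is a monomorphism precisely when its pullback along itself consists of two equal projections (equivalently, the diagonal is an isomorphism), and this condition is preserved by any pullback-preserving functor. Hence $F$ preserves and reflects monomorphisms, so Proposition \ref{ess-reflection} applies and tells us that $F$ reflects essential monomorphisms.

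Next I would address pullback stability. Suppose $F(m)$ is a pullback stable essential monomorphism in $\mathcal{X}$, and let $m'\colon S'\to A'$ be any pullback of $m$ in $\mathcal{C}$, along some morphism $A'\to A$. Since $F$ preserves pullbacks, $F(m')$ is a pullback of $F(m)$ in $\mathcal{X}$, and is therefore an essential monomorphism by our assumption on $F(m)$. Applying reflection of essential monomorphisms established in the previous step, we conclude that $m'$ itself is an essential monomorphism in $\mathcal{C}$. Since the pullback $m'$ was arbitrary, $m$ is a pullback stable essential monomorphism, which is exactly what we needed to prove.

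There is no real obstacle here: the argument is a two-line composition of Proposition \ref{ess-reflection} with the trivial observation that pullbacks of pullbacks of $m$ are again pullbacks of $m$, carried across $F$ using its preservation of pullbacks. The only mildly subtle point is remembering that ``preserves pullbacks'' already implies ``preserves monomorphisms'', so the hypothesis is in fact stronger than the verbatim statement suggests and neatly feeds into Proposition \ref{ess-reflection}.
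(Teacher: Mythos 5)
Your proof is correct and is exactly the argument the paper intends (it gives the statement with a \qed, deriving it from Proposition \ref{ess-reflection}): pullbacks of $m$ are sent by $F$ to pullbacks of $F(m)$, which are essential monomorphisms and hence reflected. Your observation that pullback-preservation already yields mono-preservation, so that Proposition \ref{ess-reflection} applies, is the same point the paper records later as Proposition 4.2(b).
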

	
	And, again, the story of preservation is more complicated:
	
	\begin{theorem}\label{m ifff F(m)}
		If $F:\mathcal{C}\to\mathcal{X}$ is a special weak bifibration that preserves pullbacks and reflects monomorphisms, then a morphism $m$ in $\mathcal{C}$ is a pullback stable essential monomorphism if and only if the morphism $F(m)$ is a pullback stable essential monomorphism in $\mathcal{X}$.
	\end{theorem}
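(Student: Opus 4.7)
The plan is to handle the two implications separately, using Proposition~\ref{pres-reflect} for the reflection direction and exploiting the \emph{special} weak bifibration hypothesis to reduce preservation to Theorem~\ref{weak-opfib}.

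The direction ``$F(m)$ pullback stable essential $\Rightarrow$ $m$ pullback stable essential'' is Proposition~\ref{pres-reflect}, since preservation of pullbacks already implies preservation of monomorphisms, and then Proposition~\ref{ess-reflection} takes care of reflecting essentiality along any pullback of $m$ inside $\mathcal{C}$.

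For the converse, assume $m \colon S \to A$ is pullback stable essential in $\mathcal{C}$, and let $v \colon X \to F(A)$ be an arbitrary morphism in $\mathcal{X}$; we need to show that the pullback of $F(m)$ along $v$ is an essential monomorphism. Invoking the special weak fibration hypothesis produces $f \colon U \to A$ in $\mathcal{C}$ together with an isomorphism $\varphi \colon F(U) \to X$ such that $F(f) = v\varphi$. Form the pullback $m' \colon T \to U$ of $m$ along $f$ in $\mathcal{C}$; since $m$ is pullback stable essential, $m'$ is essential. Now $F$ is a weak opfibration preserving and reflecting monomorphisms, so Theorem~\ref{weak-opfib} applies and yields that $F(m')$ is essential in $\mathcal{X}$. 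On the other hand, preservation of pullbacks by $F$ makes $F(m')$ the pullback of $F(m)$ along $F(f) = v\varphi$, and pulling back further along $\varphi^{-1}$ (which is an isomorphism, hence a stable pullback operation) shows that this is isomorphic, as an arrow, to the pullback of $F(m)$ along $v$. Essentiality is invariant under such isomorphism, so the pullback of $F(m)$ along $v$ is essential, as required.

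The main subtlety is precisely the bridge between an arbitrary $v \colon X \to F(A)$ and a morphism of the form $F(f)$: the \emph{special} fibration half of the hypothesis is what provides this bridge (up to isomorphism of the domain), while the opfibration half feeds Theorem~\ref{weak-opfib} to transport essentiality of $m'$ across $F$. The requirement that $\varphi$ be an isomorphism, rather than merely an epimorphism as in a general weak fibration, is exactly what is needed for this reduction to preserve pullback shape; this is why the full strength ``special weak bifibration'' hypothesis is used.
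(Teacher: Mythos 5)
Your proof is correct and follows essentially the same route as the paper's: reduce to preservation via Proposition~\ref{pres-reflect}, use the special weak fibration property to replace an arbitrary $v\colon X\to F(A)$ by $F(f)$ up to the isomorphism $\varphi$, pull back $m$ along $f$ in $\mathcal{C}$, apply Theorem~\ref{weak-opfib} to the resulting essential monomorphism, and transfer essentiality across $\varphi$. No gaps.
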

	
	\begin{proof}
		Thanks to Proposition \ref{pres-reflect}, it suffices to prove that $F$ preserves pullback stable essential monomorphisms. Let $m:S\to C$ be a pullback stable essential monomorphism in $\mathcal{C}$ and let
			\begin{equation*}\xymatrix{F(S)\times_{F(C)}X\ar[d]\ar[r]&X\ar[d]^u\\F(S)\ar[r]_-{F(m)}&F(C)}
			\end{equation*}
		be a pullback diagram in $\mathcal{X}$; we have to show that the pullback projection $F(S)\times_{F(C)}X\to X$ is an essential monomorphism in $\mathcal{X}$. For, since $F$ is a special weak fibration, we can choose a morphism $f:U\to C$ in $\mathcal{C}$ and an isomorphism $\varphi:F(U)\to X$ in $\mathcal{X}$ with $F(f)=u\varphi$, which allows us to argue as follows:
		\begin{itemize}
			\item Since $m$ is a pullback stable essential monomorphism and $F$ preserves pullbacks, the pullback projection $F(S)\times_{F(C)}F(U)\to F(U)$ of
			\begin{equation*}\xymatrix{F(S)\times_{F(C)}F(U)\ar[d]\ar[r]&F(U)\ar[d]^{F(f)}\\F(S)\ar[r]_-{F(m)}&F(C)}
			\end{equation*}
			is an essential monomorphism by Theorem \ref{weak-opfib}.			\item Since $\varphi$ is an isomorphism with $F(f)=u\varphi$, this implies that the pullback projection $F(S)\times_{F(C)}X\to X$ is also an essential monomorphism.			
		\end{itemize}
	\end{proof}
	
	\section{Involving essential localizations}
	
	Let us recall now, although essential localizations were already mentioned in Example \ref{sle}:
	
	\begin{definition}
		Let $\mathcal{C}$ be a category with finite limits. A \textit{localization} of $\mathcal{C}$ is an adjunction
		\begin{equation}
		(F,G,\eta,\varepsilon):\mathcal{C}\to\mathcal{X},
		\end{equation}
		in which $F:\mathcal{C}\to\mathcal{X}$ preserves finite limits while $G:\mathcal{X}\to\mathcal{C}$ is fully faithful. We will also say that the localization above is:
		\begin{itemize}
			\item [(a)] \textit{faithful}, if so is the functor $F$;
			\item [(b)] \textit{essential}, if the functor $F$ has a left adjoint.
		\end{itemize}   
	\end{definition}
	
	We will usually speak of just $F$ instead of $(F,G,\eta,\varepsilon):\mathcal{C}\to\mathcal{X}$. The following proposition is well known:
	
	\begin{proposition}
		For an arbitrary functor $F$, we have:
		\begin{itemize}
			\item [(a)] If $F=(F,G,\eta,\varepsilon)$ is an essential localization, then the left adjoint of $F$ is fully faithful.
			\item [(b)] If $F$ preserves pullbacks, then it preserves monomorphisms; in particular it is the case when $F$ is a localization. Consequently every essential localization preserves monomorphisms and epimorphisms.
			\item [(c)] If $F$ is faithful, then it reflects monomorphisms and epimorphisms.
		\end{itemize} 
	\end{proposition}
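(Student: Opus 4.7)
My plan is to handle the three parts in turn; (a) is the substantive one, and (b), (c) follow from standard adjoint-functor machinery.

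For (a), I write $H$ for the left adjoint of $F$, with unit $\alpha\colon 1_\mathcal{X} \to FH$; the aim is to show $\alpha$ is invertible, equivalently that $H$ is fully faithful. The plan is a Yoneda argument. Since $G$ is fully faithful, the counit $\varepsilon\colon FG\to 1_\mathcal{X}$ of $F\dashv G$ is an isomorphism, and one assembles, naturally in $Y\in\mathcal{X}$, the chain
\[
\mathrm{Hom}_\mathcal{X}(FHX, Y) \cong \mathrm{Hom}_\mathcal{C}(HX, GY) \cong \mathrm{Hom}_\mathcal{X}(X, FGY) \cong \mathrm{Hom}_\mathcal{X}(X, Y),
\]
using in succession $F\dashv G$, $H\dashv F$, and post-composition with $\varepsilon_Y$. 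By Yoneda, the resulting natural isomorphism is induced by a unique iso $X\to FHX$, and chasing the image of $1_{FHX}$ through the chain---the final step using the triangle identity $\varepsilon_F\circ F\eta = 1_F$---identifies that iso with $\alpha_X$ itself.

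For (b), monomorphisms are characterized by a pullback diagram (the kernel pair of a mono is its diagonal), so any pullback-preserving functor preserves monomorphisms; a localization qualifies because it preserves finite limits by definition. Dually, epimorphisms are characterized by a pushout diagram, so any left adjoint preserves epimorphisms; a localization is by definition a left adjoint (to $G$), and \emph{a fortiori} so is every essential localization.

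For (c), this is a one-line diagram chase: if $Ff$ is monic and $fg=fh$ in $\mathcal{C}$, then $F(fg) = F(fh)$ forces $Fg = Fh$ by monicness of $Ff$, whence $g=h$ by faithfulness of $F$, so $f$ is monic; the epimorphism case is strictly dual.

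The only non-formal piece is the Yoneda trace in (a), and even that is routine bookkeeping with triangle identities; nothing about the categories $\mathcal{C}$, $\mathcal{X}$ beyond the two adjunctions and fullness of $G$ is needed.
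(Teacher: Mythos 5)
Your proof is correct. The paper offers no proof of this proposition at all --- it is introduced with the phrase ``the following proposition is well known'' and left unproved --- so there is nothing to compare against; for the record, your Yoneda chain for (a), identifying the composite isomorphism $\mathrm{Hom}(FHX,-)\cong\mathrm{Hom}(X,-)$ with precomposition by the unit $\alpha_X$ via the triangle identity $\varepsilon_F\circ F\eta=1_F$, is the standard argument that in a string of adjoints $H\dashv F\dashv G$ the outer functors are fully faithful together, and your treatments of (b) and (c) (monos as pullbacks, epis preserved by the left adjoint $F\dashv G$, and the usual cancellation chase for reflection under faithfulness) are all sound.
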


	\begin{remark}\label{Hopf}
		Localizations are rare in non-additive algebraic categories \cite{[CGJ]}.  An example of essential localization which is not faithful can be given in the semi-abelian category \cite{JMT} $\mathsf{Hopf}_{K, coc}$ of cocommutative Hopf algebras over an algebraically closed field $K$ with characteristic $0$ (see \cite{[GKV]} for more details). In this case we can take
		\begin{itemize}
			\item ${\mathcal C} = \mathsf{Hopf}_{K, coc}$;
			\item $\mathcal{X}=\mathsf{GrpHopf}_K$, the full subcategory of $\mathsf{Hopf}_{K, coc}$ whose objects are group-Hopf algebras, i.e., those Hopf algebras which are generated (as vector spaces) by group-like elements;
			\item $F \colon \mathsf{Hopf}_{K, coc} \rightarrow \mathsf{GrpHopf}_K$ is the functor sending a Hopf algebra to its group-Hopf subalgebra generated by group-like elements;
			\item the embedding functor $\mathsf{GrpHopf}_K \rightarrow  \mathsf{Hopf}_{K, coc}$ is both a left and right adjoint of $F$.
				\end{itemize}
				\end{remark} 
	 In the following we shall be interested in \emph{faithful} essential localizations: examples of such localizations are given here below in Example \ref{example-faithful} and in Remark \ref{Topo-functor}. An essential localization (1) is in fact a selfdual data having, together with (1), an adjunction $(H,F,\zeta,\theta):\mathcal{X}\to\mathcal{C}$ that determines a left adjoint of $F$. This gives, for each object $C$ in $\mathcal{C}$ and each object $X$ in $\mathcal{X}$, the diagrams
	\begin{equation}\label{split-extension}
	\xymatrix{HF(C)\ar[r]^-{\theta_C}&C\ar[r]^-{\eta_C}&GF(C),&FG(X)\ar[r]^-{\varepsilon_X}&X\ar[r]^-{\zeta_X}&FH(X).}
	\end{equation}
	As already mentioned in Example \ref{sle}, these assumptions make $F$ a special bifibration. Furthermore, having a left adjoint $F$ preserves all existing limits, and, since it is also required to be faithful, it preserves and reflects essential monomorphisms and pullback stable essential monomorphisms, by Theorems \ref{weak-opfib} and \ref{m ifff F(m)}, respectively. 
	
	\begin{remark}
		Since the functors $G$ and $H$ are fully faithful, the morphisms $\varepsilon_X$ and $\zeta_X$ above are always isomorphisms, which also implies that so are $F(\theta_C)$ and $F(\eta_C)$.  When $F$ is also faithful, it follows that $\theta_C$ and $\eta_C$ are bimorphisms, that is, they are monomorphisms and epimorphisms at the same time. In particular, if $\mathcal{C}$ is balanced, that is, every bimorphism in it is an isomorphism (or, informally, it satisfies the `equation' $\mathrm{mono}+\mathrm{epi}=\mathrm{iso}$), then $F$ is a category equivalence.   
	\end{remark} 
	
	\begin{example}\label{example-faithful}
		A paradigmatic example of a faithful essential localization is the following:
		\begin{itemize}
			\item $\mathcal{C}=\mathsf{Top}$, the category of topological spaces;
			\item $\mathcal{X}=\mathsf{Set}$, the category of sets;
			\item $F$ is the forgetful functor $\mathsf{Top}\to\mathsf{Set}$;
			\item $G$ and $H$ are the functors $\mathsf{Set}\to\mathsf{Top}$, which equip sets with the indiscrete and discrete topologies, respectively;
			\item all arrows involved in (2) are identity maps at the set level, that is, $F(\theta_C)=1_C=F(\eta_C)$ and $\varepsilon_X=1_X=\zeta_X$.
		\end{itemize}
	Similarly, when $\mathcal{C}=\mathsf{Grp(Top)}$ is the category of topological groups and $\mathcal{X}= \mathsf{Grp}$ the category of groups (identified with the category of indiscrete groups), the forgetful functor $F \colon \mathsf{Grp(Top)}\to\mathsf{Grp}$ has both a left and a right adjoint $\mathsf{Grp}\to\mathsf{Grp(Top)}$, namely the functors $G$ and $H$ which equip a group with the indiscrete and discrete topologies, respectively. The same remains true if one replaces the algebraic theory of groups with any algebraic theory in the sense of universal algebra.
	
	\end{example}
		\begin{remark}\label{Topo-functor}
		As suggested by Example \ref{example-faithful} we could require $FG=1_{\mathcal{X}}=FH$ and $\varepsilon$ and $\zeta$ to be identity natural transformations, which would bring us to the context of `concrete categories' in the sense of \cite{[AHS]}, and give many similar examples: see Section 8 in \cite{[AHS]}. In fact any topological functor as defined, say, in \cite{[H]}, \cite{[B]}, and \cite{[AHS]}, gives such an example, and there are many other examples constructed as various relational structures, not mentioned in these papers. Note, on the other hand, that the requirement above would not change much, except making our context more suitable for using the language of Grothendieck fibrations involving cartesian liftings. Concerning the relationship between localizations and fibrations see \cite{[BCGS]} and \cite{[CGJ]}.    
	\end{remark}
	
	A further convenient condition on the data above is:
	
	\begin{condition}\label{essential-stable}
		Every pullback stable essential monomorphism in $\mathcal{X}$ is an isomorphism.
	\end{condition}
	
	
	From Theorem \ref{m ifff F(m)} we obtain:
	
	\begin{theorem}\label{spectral-ess}
		Let $F:\mathcal{C}\to\mathcal{X}$ be a faithful essential localization satisfying Condition \ref{essential-stable}. Then a morphism $m$ in $\mathcal{C}$ is a pullback stable essential monomorphism if and only if the morphism $F(m)$ is an isomorphism in~$\mathcal{X}$.\qed
	\end{theorem}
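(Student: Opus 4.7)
My plan is to reduce the statement directly to Theorem \ref{m ifff F(m)} combined with Condition \ref{essential-stable}. The work has really already been done earlier in the section; the proof is essentially an assembly of three facts.

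First I would verify that a faithful essential localization $F:\mathcal{C}\to\mathcal{X}$ satisfies the hypotheses of Theorem \ref{m ifff F(m)}. Namely: (i) $F$ is a special weak bifibration by Example \ref{sle}, since an essential localization provides both adjunctions required in that example, with both unit and counit components being (iso)morphisms of the required kind; (ii) $F$ preserves pullbacks because, being essential, it has a left adjoint $H$, hence preserves all existing limits, and in any case $F$ preserves finite limits by the very definition of a localization; (iii) $F$ reflects monomorphisms because it is faithful. This is exactly the observation already made in the paragraph preceding Condition \ref{essential-stable}.

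Applying Theorem \ref{m ifff F(m)} then yields the equivalence: $m$ is a pullback stable essential monomorphism in $\mathcal{C}$ if and only if $F(m)$ is a pullback stable essential monomorphism in $\mathcal{X}$. Combining this with Condition \ref{essential-stable}, which forces every pullback stable essential monomorphism in $\mathcal{X}$ to be an isomorphism, we get the forward implication. The converse is immediate since isomorphisms are trivially pullback stable essential monomorphisms, so $F(m)$ iso implies $F(m)$ is a pullback stable essential monomorphism in $\mathcal{X}$, which by Theorem \ref{m ifff F(m)} (the reflection half, coming from Proposition \ref{pres-reflect}) forces $m$ itself to be a pullback stable essential monomorphism.

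There is no real obstacle here: all of the delicate work on weak bifibrations and on preservation/reflection of (pullback stable) essential monomorphisms was absorbed into Theorem \ref{m ifff F(m)}, and the rest is a matter of quoting Example \ref{sle}, the remarks on faithful essential localizations, and Condition \ref{essential-stable}. The only thing one has to be slightly careful about is bookkeeping of the hypotheses of Theorem \ref{m ifff F(m)}, which is why I would spell out the three items (i)--(iii) above before invoking it.
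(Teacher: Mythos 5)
Your proposal is correct and follows exactly the route the paper takes: the theorem is stated with an immediate \qed precisely because it is the combination of Theorem \ref{m ifff F(m)} (whose hypotheses are verified for a faithful essential localization in Example \ref{sle} and the discussion after Remark \ref{Hopf}) with Condition \ref{essential-stable} and the trivial observation that isomorphisms are pullback stable essential monomorphisms. Your explicit bookkeeping of hypotheses (i)--(iii) just spells out what the paper leaves implicit.
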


	\section{The spectral category}
		
	The spectral category $\mathrm{Spec}(\mathcal{C},\mathcal{S})$ of $(\mathcal{C},\mathcal{S})$, where $\mathcal{S}$ is a class of monomorphisms in $\mathcal{C}$ satisfying suitable conditions, was defined in our previous paper \cite{[AFGJ]}; when $\mathcal{S}$ is the class of all monomorphisms in $\mathcal{C}$, we wrote $\mathrm{Spec}(\mathcal{C})$ instead of $\mathrm{Spec}(\mathcal{C},\mathcal{S})$. Recall that	
    \begin{equation*}
    \mathrm{Spec}(\mathcal{C})=\mathcal{C}[(\mathrm{St}(\mathrm{Mono}_E(\mathcal{C})))^{-1}]
    \end{equation*}   
	is the category of fractions of $\mathcal{C}$ for the class $\mathrm{St}(\mathrm{Mono}_E(\mathcal{C}))$ of pullback stable essential monomorphisms of $\mathcal{C}$. However, under the assumptions of Section 4, including Condition \ref{essential-stable}, Theorem \ref{spectral-ess} tells us that $\mathrm{St}(\mathrm{Mono}_E(\mathcal{C}))$ is nothing but the class of all morphisms in $\mathcal{C}$ whose $F$-images are isomorphisms. After that Proposition 1.3 of Chapter I in \cite{[GZ]} gives
	
	\begin{theorem}\label{spectral}
		Let $F:\mathcal{C}\to\mathcal{X}$ be a faithful essential localization satisfying Condition \ref{essential-stable}. Then $F$ factors uniquely through the canonical functor $\mathcal{C}\to\mathrm{Spec}(\mathcal{C})$, and the resulting functor $\bar{F}:\mathrm{Spec}(\mathcal{C})\to\mathcal{X}$ is a category equivalence.
	\end{theorem}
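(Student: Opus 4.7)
The plan is to verify the universal factorization first, and then exhibit an explicit quasi-inverse to $\bar{F}$ built from the right adjoint $G$ of $F$. The first step is an immediate application of Theorem \ref{spectral-ess}: under the hypotheses, the class $\mathrm{St}(\mathrm{Mono}_E(\mathcal{C}))$ coincides with the class of morphisms inverted by $F$, so $F$ sends every pullback stable essential monomorphism to an isomorphism. By Proposition 1.3 of Chapter I in \cite{[GZ]}, there is a unique functor $\bar{F}:\mathrm{Spec}(\mathcal{C})\to\mathcal{X}$ such that $\bar{F}\circ P_{\mathcal{C}}=F$. All that remains is to show that $\bar{F}$ is a category equivalence.

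The candidate quasi-inverse is $P_{\mathcal{C}}\circ G:\mathcal{X}\to\mathrm{Spec}(\mathcal{C})$. On one side, one computes
\begin{equation*}
\bar{F}\circ(P_{\mathcal{C}}\circ G)=(\bar{F}\circ P_{\mathcal{C}})\circ G=F\circ G,
\end{equation*}
and $FG\cong 1_{\mathcal{X}}$ via the counit $\varepsilon$, which is an isomorphism because $G$ is fully faithful. On the other side, the key observation is that for every object $C\in\mathcal{C}$ the unit component $\eta_C:C\to GF(C)$ satisfies $F(\eta_C)$ is an isomorphism (by the triangle identity $\varepsilon_{F(C)}\circ F(\eta_C)=1_{F(C)}$ combined with the fact that $\varepsilon$ is an isomorphism). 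Applying Theorem \ref{spectral-ess} in the converse direction, $\eta_C$ lies in $\mathrm{St}(\mathrm{Mono}_E(\mathcal{C}))$, so $P_{\mathcal{C}}(\eta_C)$ is an isomorphism in $\mathrm{Spec}(\mathcal{C})$. Hence $P_{\mathcal{C}}\eta:P_{\mathcal{C}}\to P_{\mathcal{C}}GF=(P_{\mathcal{C}}G)\bar{F}P_{\mathcal{C}}$ is a natural isomorphism of functors $\mathcal{C}\to\mathrm{Spec}(\mathcal{C})$.

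The last step is to promote this to a natural isomorphism $1_{\mathrm{Spec}(\mathcal{C})}\cong(P_{\mathcal{C}}G)\bar{F}$. Since $\mathrm{Spec}(\mathcal{C})$ has the same objects as $\mathcal{C}$ and $P_{\mathcal{C}}$ is the identity on objects, one defines the components of the desired natural transformation at an object to be $P_{\mathcal{C}}(\eta_C)$; naturality with respect to a general morphism $[s^{-1}f]$ in $\mathrm{Spec}(\mathcal{C})$ follows from naturality of $\eta$ in $\mathcal{C}$ together with the naturality established above relative to $P_{\mathcal{C}}$. Together with essential surjectivity (already visible from $\bar{F}(P_{\mathcal{C}}G(X))\cong X$), this shows that $\bar{F}$ is a category equivalence with quasi-inverse $P_{\mathcal{C}}\circ G$.

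The step I expect to require the most care is the last one: transporting the natural isomorphism from $\mathcal{C}$ to $\mathrm{Spec}(\mathcal{C})$. Although intuitively unavoidable because $P_{\mathcal{C}}$ is a bijection on objects and universal among functors inverting $\mathrm{St}(\mathrm{Mono}_E(\mathcal{C}))$, making this precise requires invoking the 2-categorical universal property of the category of fractions (the version of Proposition 1.3 of \cite{[GZ]} that concerns natural transformations, not merely functors). Everything else is a routine bookkeeping of adjunction data already collected in Section 4.
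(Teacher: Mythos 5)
Your proposal is correct and follows essentially the same route as the paper: the paper's proof consists of invoking Theorem \ref{spectral-ess} to identify $\mathrm{St}(\mathrm{Mono}_E(\mathcal{C}))$ with the class of morphisms inverted by $F$ and then citing Proposition 1.3 of Chapter I of \cite{[GZ]}, which is exactly your first paragraph. The remainder of your argument --- constructing the quasi-inverse $P_{\mathcal{C}}\circ G$ from the fully faithful right adjoint, using $\varepsilon$ and $P_{\mathcal{C}}(\eta)$ together with the $2$-categorical universal property of the category of fractions --- is precisely the standard proof of that cited proposition, so you have simply (and correctly) unpacked the reference.
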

	
	\section{Bimorphisms and duality}
	
Let us now compare Condition \ref{essential-stable} with the following ones:
	
	\begin{condition}\label{split}
		Every monomorphism in $\mathcal{X}$ is split.
	\end{condition}
	
	\begin{condition}\label{balanced}
		$\mathcal{X}$ is balanced.
	\end{condition}
	
	\begin{proposition}\label{comparison}
		Conditions \ref{essential-stable}, \ref{split} and \ref{balanced} are related to each other as follows:
		\begin{itemize}
			\item [(a)] $\ref{split} \Rightarrow( \ref{essential-stable}\,\&\,\ref{balanced})$;
			\item [(b)] none of the implications $ \ref{essential-stable} \Rightarrow \ref{balanced}$, $\ref{balanced} \Rightarrow  \ref{essential-stable}$, and $( \ref{essential-stable}\,\&\,\ref{balanced})\Rightarrow \ref{split}$ holds in general.
		\end{itemize}
	\end{proposition}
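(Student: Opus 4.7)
My plan is to handle (a) by two direct retraction arguments and to produce three separate counterexamples for (b), with the last one being the main obstacle.

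For part (a), the plan is to prove $\ref{split}\Rightarrow\ref{balanced}$ and $\ref{split}\Rightarrow\ref{essential-stable}$ separately. For the first: if $f\colon A\to B$ is a bimorphism then Condition \ref{split} provides a retraction $r\colon B\to A$ with $rf=\mathrm{id}_A$; from $frf=f\cdot\mathrm{id}_A$ and the fact that $f$ is an epimorphism one obtains $fr=\mathrm{id}_B$, so $f$ is an isomorphism. For the second, I would prove the stronger statement that under Condition \ref{split} every essential monomorphism (and thus every pullback stable essential monomorphism) is an isomorphism. Given essential $m\colon S\to A$, Condition \ref{split} supplies $r$ with $rm=\mathrm{id}_S$; the composite $rm$ is then a monomorphism (being an identity), so essentialness of $m$ forces $r$ to be a monomorphism as well; from $r\cdot mr=(rm)\cdot r=r=r\cdot\mathrm{id}_A$ one then left-cancels the monic $r$ to conclude $mr=\mathrm{id}_A$.

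For part (b) I would give three counterexamples. For $\ref{balanced}\not\Rightarrow\ref{essential-stable}$ I take $\mathcal{X}=\mathsf{Ab}$: it is balanced, and the inclusion $\mathbb{Z}\hookrightarrow\mathbb{Q}$ is essential (since $\mathbb{Q}$ is the injective envelope of $\mathbb{Z}$), not an isomorphism, and pullback stable essential because in any abelian category pullbacks preserve essential monomorphisms. The key verification is: if $A\hookrightarrow B$ is essential and $f\colon C\to B$ is any morphism, then any nonzero subobject $Y\leq C$ meets $f^{-1}(A)$, either because $\ker f\cap Y$ is a nonzero subobject already contained in $f^{-1}(A)$, or because $f|_Y$ is monic and essentialness of $A\leq B$ forces $f(Y)\cap A\neq 0$, producing a nonzero element of $Y\cap f^{-1}(A)$. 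For $(\ref{essential-stable}\wedge\ref{balanced})\not\Rightarrow\ref{split}$ I take $\mathcal{X}=\mathsf{Set}$, which is balanced; a short case analysis on $|S|$ shows that the only non-isomorphism essential monomorphism in $\mathsf{Set}$ is $\emptyset\to\{\ast\}$, and pulling this back along $\{0,1\}\to\{\ast\}$ gives $\emptyset\to\{0,1\}$, which is not essential, so pullback stable essential monomorphisms in $\mathsf{Set}$ are exactly the isomorphisms and \ref{essential-stable} holds; yet $\emptyset\to\{\ast\}$ has no retraction, so \ref{split} fails.

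The hardest case I expect to be $\ref{essential-stable}\not\Rightarrow\ref{balanced}$, because every standard non-balanced category I have inspected ($\mathsf{Top}$, $\mathsf{Haus}$, $\mathsf{Pos}$, $\mathsf{Mon}$, $\mathsf{Ring}$, $\mathsf{Grp}$) contains a pullback stable essential monomorphism that is not an isomorphism -- typically a continuous bijection from a finer to a coarser topology, or an inclusion such as $\mathbb{N}\hookrightarrow\mathbb{Z}$ or $\mathbb{Z}\hookrightarrow\mathbb{Q}$ whose essentialness persists under pullback -- so each of these simultaneously fails \ref{essential-stable} and is unusable. My approach would be either (i) to construct a small category by carefully adjoining a non-iso bimorphism to a balanced \ref{essential-stable}-category such as $\mathsf{Set}$, maintaining finite limits and arranging the pullback of the new bimorphism against a suitable morphism to introduce parallel arrows that break essentialness; or (ii) to find a concrete topological-type category, perhaps among those catalogued in \cite{[B]} or \cite{[AHS]}, in which pullback destroys essentialness of every non-iso bimorphism. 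The bookkeeping in (i) needed to preserve finite limits while killing pullback stable essentialness, and the search for the right category in (ii), are where I expect the real technical work to lie.
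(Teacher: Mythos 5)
Your part (a) and two of your three counterexamples are fine. The two retraction arguments for (a) are correct (the paper simply declares (a) obvious), and your choices of $\mathsf{Ab}$ for $\ref{balanced}\not\Rightarrow\ref{essential-stable}$ and of $\mathsf{Set}$ for $(\ref{essential-stable}\,\&\,\ref{balanced})\not\Rightarrow\ref{split}$ coincide with the paper's, with correct verifications (pullback stability of essential monomorphisms in $\mathsf{Ab}$, and the analysis showing $\emptyset\to 1$ is the only non\-/invertible essential monomorphism of $\mathsf{Set}$ and is not pullback stable).

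The genuine gap is the implication $\ref{essential-stable}\not\Rightarrow\ref{balanced}$: you never produce a category witnessing it, only two search strategies, so the proof is incomplete as it stands. Your strategy (i) is in fact the right one, and the paper realizes it concretely: take $\mathcal{X}$ to be the variety with one unary operation $\omega$ satisfying $\omega(x)=\omega(y)$, i.e.\ pointed sets with a freely added initial object $\emptyset$. Here $\emptyset\to 1$ is a non-invertible bimorphism (vacuously monic, and epic because every nonempty object receives exactly one basepoint-preserving map from $1$), so Condition \ref{balanced} fails. On the other hand, a proper monomorphism $S\to T$ between nonempty objects is never essential (collapse a point of $T\setminus S$ to the basepoint: this stays injective on $S$ but is not monic), and $\emptyset\to T$ with $|T|\geq 2$ is not essential since $T\to 1$ is not monic; so the only non-invertible essential monomorphism is $\emptyset\to 1$, whose pullback along $T\to 1$ with $|T|\geq 2$ is the non-essential $\emptyset\to T$. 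Hence every pullback stable essential monomorphism is an isomorphism and Condition \ref{essential-stable} holds. Until you supply such an example (this one, or another), part (b) is not proved.
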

	\begin{proof}
		(a) is obvious. To prove (b) we have the following counter-examples:
		\begin{itemize}
			\item Let $\mathcal{X}$ be the variety of universal algebras with a single unary operation $\omega$ satisfying the identity $\omega(x)=\omega(y)$. This makes $\omega$ a constant operation, and, informally, $\mathcal{X}$ consists of pointed sets and the empty set put together. Or, formally, $\mathcal{X}$ can be seen as the category of pointed sets together with a freely added new initial object. It is easy to see that $\mathcal{X}$ satisfies Condition \ref{essential-stable}, but not Condition \ref{balanced}.
			\item The category of abelian groups (where the class of essential monomorphisms is pullback stable) satisfies Condition \ref{balanced}, but not Condition \ref{essential-stable}.
			\item Condition \ref{split} does not hold in the category $\mathsf{Set}$ of sets, although both conditions $\ref{essential-stable}$ and $\ref{balanced}$ do.
			Note that $0\to 1$ is an essential (non-split and non-pullback-stable-essential) monomorphism in $\mathsf{Set}$. The fact that every essential monomorphism in $\mathsf{Set}$ is either an isomorphism or of the form $0\to 1$ is a trivial but nice observation we found in \cite{[AHS]}.
		\end{itemize} 
	\end{proof}
	\begin{remark}
		It would be more complicated to compare either essential monomorphisms with bimorphisms, or pullback stable essential monomorphisms with pullback stable bimorphisms, but fortunately we did not need that. 
	\end{remark}
	Theorem \ref{spectral-ess} was the best we could do with pullback stable essential monomorphisms under Condition \ref{essential-stable}, while we could do nothing under Condition \ref{balanced} alone. Therefore Proposition \ref{comparison} suggests to consider the conjunction of Conditions \ref{essential-stable} and \ref{balanced}, and Theorem \ref{spectral-ess} immediately gives:

	\begin{theorem}\label{bimorphism}
		Let $F:\mathcal{C}\to\mathcal{X}$ be a faithful essential localization satisfying Conditions \ref{essential-stable} and \ref{balanced}. Then a morphism $m$ in $\mathcal{C}$ is a pullback stable essential monomorphism if and only if it is a bimorphism.
	\end{theorem}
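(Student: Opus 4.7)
The plan is to reduce everything to Theorem \ref{spectral-ess}, which under Condition \ref{essential-stable} already tells us that $m$ is a pullback stable essential monomorphism if and only if $F(m)$ is an isomorphism in $\mathcal{X}$. Thus it suffices to show that, under the stated hypotheses, $F(m)$ is an isomorphism in $\mathcal{X}$ if and only if $m$ is a bimorphism in $\mathcal{C}$.

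For the forward direction, assume $F(m)$ is an isomorphism. Then $F(m)$ is both a monomorphism and an epimorphism in $\mathcal{X}$. Since $F$ is faithful, it reflects both monomorphisms and epimorphisms by Proposition \ref{pres-reflect} (c), so $m$ is simultaneously a monomorphism and an epimorphism, i.e.\ a bimorphism.

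For the converse, assume $m$ is a bimorphism. Since $F$ is an essential localization, Proposition \ref{pres-reflect} (b) guarantees that $F$ preserves both monomorphisms (as a localization, it preserves pullbacks) and epimorphisms (as a left adjoint, thanks to the additional left adjoint provided by essentiality). Hence $F(m)$ is a bimorphism in $\mathcal{X}$. Invoking Condition \ref{balanced} then upgrades $F(m)$ to an isomorphism.

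Combining the two directions with Theorem \ref{spectral-ess} yields the claimed equivalence. No serious obstacle is expected: the statement is essentially a formal consequence of the translation theorem of Section 5 together with the preservation/reflection properties of a faithful essential localization, and the only place where Condition \ref{balanced} is actually used is in converting an $F$-image bimorphism back into an isomorphism.
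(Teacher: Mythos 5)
Your proof is correct and is exactly the argument the paper intends: the paper merely states that Theorem \ref{spectral-ess} ``immediately gives'' this result, and your reduction ($m$ is a pullback stable essential monomorphism iff $F(m)$ is an isomorphism iff, by reflection/preservation of monomorphisms and epimorphisms together with balancedness of $\mathcal{X}$, $m$ is a bimorphism) is the intended filling-in. Two small slips worth fixing: the preservation/reflection facts you cite live in the (unlabelled) proposition of Section 4 on localizations, not in Proposition \ref{pres-reflect}, and $F$ preserves epimorphisms because it is itself a left adjoint (of $G$), not because of the additional left adjoint $H$ supplied by essentiality (the latter makes $F$ a right adjoint and so gives preservation of monomorphisms).
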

	
	Next, note that while the notion of faithful essential localization and Conditon \ref{balanced} are self-dual, Condition  \ref{essential-stable} is not. But why not dualizing it? Theorem \ref{bimorphism}, together with Theorem \ref{spectral}, will immediately give:
	\begin{theorem}
		Let $F:\mathcal{C}\to\mathcal{X}$ be a faithful essential localization satisfying Conditions \ref{essential-stable}, its dual condition, and Condition \ref{balanced}. Then the classes of pullback stable essential monomorphisms in $\mathcal{C}$ and in $\mathcal{C}^\mathrm{op}$ coincide with each other and with the class of bimorphisms in $\mathcal{C}$; in particular, the dual of the canonical functor $\mathcal{C}\to\mathrm{Spec}(\mathcal{C})$ can be identified with the canonical functor $\mathcal{C}^\mathrm{op}\to\mathrm{Spec}(\mathcal{C}^\mathrm{op})$, and
		\begin{equation*}
		\mathrm{Spec}(\mathcal{C})\approx\mathcal{X}\approx\mathrm{Spec}(\mathcal{C}^\mathrm{op})^\mathrm{op}.\qed
		\end{equation*}
	
	\end{theorem}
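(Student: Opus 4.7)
The plan is to apply Theorem \ref{bimorphism} twice, once to $F$ and once to $F^{\mathrm{op}}$, and then deduce both the comparison of spectral categories and the equivalence with $\mathcal{X}$ from Theorem \ref{spectral} together with the universal property of categories of fractions.

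The first step is to verify that ``faithful essential localization'' is a self-dual notion. Unfolding the definition, an essential localization gives adjunctions $H \dashv F \dashv G$ with $G$ fully faithful and (by the proposition recalled just before Remark \ref{Hopf}) $H$ fully faithful as well; adding faithfulness of $F$ keeps the setup symmetric between $G$ and $H$. Taking opposites reverses the order of adjoints, so we obtain $G^{\mathrm{op}} \dashv F^{\mathrm{op}} \dashv H^{\mathrm{op}}$ with both $G^{\mathrm{op}}$ and $H^{\mathrm{op}}$ fully faithful and $F^{\mathrm{op}}$ faithful; thus $F^{\mathrm{op}}\colon \mathcal{C}^{\mathrm{op}} \to \mathcal{X}^{\mathrm{op}}$ is again a faithful essential localization. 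The hypotheses on $\mathcal{X}$ are arranged so that both $\mathcal{X}$ and $\mathcal{X}^{\mathrm{op}}$ satisfy Condition \ref{essential-stable} (the dual of Condition \ref{essential-stable} for $\mathcal{X}$ is exactly Condition \ref{essential-stable} for $\mathcal{X}^{\mathrm{op}}$) and Condition \ref{balanced} (which is self-dual).

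Consequently, Theorem \ref{bimorphism} applies to $F$ and tells us that the pullback stable essential monomorphisms in $\mathcal{C}$ are exactly the bimorphisms in $\mathcal{C}$. Applying the same theorem to $F^{\mathrm{op}}$ (which is legitimate by the previous paragraph) gives that the pullback stable essential monomorphisms in $\mathcal{C}^{\mathrm{op}}$ are exactly the bimorphisms in $\mathcal{C}^{\mathrm{op}}$. Since the class of bimorphisms is manifestly self-dual, these two classes coincide, giving the first assertion of the theorem.

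For the second assertion, $\mathrm{Spec}(\mathcal{C})$ is by definition the localization of $\mathcal{C}$ at its class of pullback stable essential monomorphisms, and likewise $\mathrm{Spec}(\mathcal{C}^{\mathrm{op}})^{\mathrm{op}}$ is the localization of $\mathcal{C}$ at the opposites of the pullback stable essential monomorphisms of $\mathcal{C}^{\mathrm{op}}$. Since both classes equal the class of bimorphisms in $\mathcal{C}$, the universal property of categories of fractions (Proposition I.1.3 of \cite{[GZ]}, as cited before Theorem \ref{spectral}) yields a canonical equivalence $\mathrm{Spec}(\mathcal{C}) \simeq \mathrm{Spec}(\mathcal{C}^{\mathrm{op}})^{\mathrm{op}}$ under which the canonical functor $\mathcal{C} \to \mathrm{Spec}(\mathcal{C})$ is identified with the opposite of $\mathcal{C}^{\mathrm{op}} \to \mathrm{Spec}(\mathcal{C}^{\mathrm{op}})$. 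Finally, Theorem \ref{spectral} applied to $F$ gives $\mathrm{Spec}(\mathcal{C}) \simeq \mathcal{X}$, and applied to $F^{\mathrm{op}}$ (plus taking opposites) gives $\mathrm{Spec}(\mathcal{C}^{\mathrm{op}})^{\mathrm{op}} \simeq \mathcal{X}$, yielding the displayed chain of equivalences. The only conceptual point that needs attention, rather than being a calculation, is the self-duality check of the hypotheses in the first paragraph; everything else is a direct assembly of previously established results.
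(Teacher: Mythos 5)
Your proposal is correct and follows essentially the same route as the paper, which simply states that the result follows immediately from Theorem \ref{bimorphism} together with Theorem \ref{spectral}. You have merely made explicit the self-duality check on the hypotheses (that $F^{\mathrm{op}}$ is again a faithful essential localization and that the conditions on $\mathcal{X}$ dualize as required), which is exactly the implicit content of the paper's one-line argument.
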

	
	There are many examples for $\mathcal{X}=\mathsf{Set}$, since in this case Conditions \ref{essential-stable}, its dual condition, and Condition \ref{balanced} always hold. In particular, we could choose $\mathcal{C}$ to be any category topological over $\mathsf{Set}$ (=a category equipped with a topological functor from it to $\mathsf{Set}$; see Remark \ref{Topo-functor}), or the dual of such a category.
	
	\section{Involving injective and projective objects}
	
	In this section, we will use the notation introduced in Section 4, right after Remark \ref{Hopf}, in order to formulate a single theorem, omitting various preliminary results.
	
	\begin{theorem}\label{7.1}
		Let $F:\mathcal{C}\to\mathcal{X}$ be a faithful essential localization. Then, in the notation of Section 4, we have:
		\begin{itemize}
			\item [(a)] If Condition \ref{split} holds, then, for every object $C$ in $\mathcal{C}$, the morphism $\eta_C:C\to GF(C)$ is an injective envelope of $C$, and, in particular, $C$ is an injective object if and only if $\eta_C$ is an isomorphism. In this case the categories $\mathrm{Spec}(\mathcal{C})$ and $\mathcal{X}$ are canonically equivalent to the category of injective objects in $\mathcal{C}$.
			\item [(b)] If the dual of Condition \ref{split} holds, then, for every object $C$ in $\mathcal{C}$, the morphism $\theta_C:HF(C)\to C$ is a projective cover of $C$, and, in particular, $C$ is a projective object if and only if $\theta_C$ is an isomorphism. In this case the categories $\mathrm{Spec}(\mathcal{C^\mathrm{op}})$ and $\mathcal{X}^\mathrm{op}$ are canonically equivalent to the dual category of projective objects in $\mathcal{C}$.  
		\end{itemize}
		
	\end{theorem}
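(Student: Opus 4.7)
The plan is to assemble part (a) from facts already established about faithful essential localizations in Section 4, combined with one short injectivity calculation using Condition \ref{split}. First observe that by Proposition \ref{comparison}(a), Condition \ref{split} entails both Condition \ref{essential-stable} and Condition \ref{balanced}, so Theorems \ref{spectral-ess}, \ref{spectral} and \ref{bimorphism} are all at our disposal: the canonical functor $\mathcal{C}\to\mathrm{Spec}(\mathcal{C})$ factors through a category equivalence $\bar F:\mathrm{Spec}(\mathcal{C})\to\mathcal{X}$, and a morphism in $\mathcal{C}$ is a pullback stable essential monomorphism precisely when $F$ sends it to an isomorphism, which in turn is equivalent to being a bimorphism.

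For (a) I would proceed in three steps. \emph{Step 1.} $\eta_C:C\to GF(C)$ is an essential monomorphism, because $F(\eta_C)$ is an isomorphism (as $G$ is fully faithful), so Theorem \ref{spectral-ess} applies. \emph{Step 2.} $GF(C)$ is injective: given a monomorphism $m:A\to B$ in $\mathcal{C}$ and a morphism $g:A\to GF(C)$, transpose $g$ across the adjunction $F\dashv G$ to $g^{\flat}:F(A)\to F(C)$; since $F$ preserves monomorphisms, $F(m)$ is a monomorphism in $\mathcal{X}$, and Condition \ref{split} provides a retraction $s:F(B)\to F(A)$. Transposing $g^{\flat}s$ back yields $h:B\to GF(C)$, and the naturality identity $(hm)^{\flat}=h^{\flat}F(m)=g^{\flat}sF(m)=g^{\flat}$ gives $hm=g$. \emph{Step 3.} If $C$ is injective, then the monomorphism $\eta_C$ admits a retraction $r$ with $r\eta_C=1_C$; since $\eta_C$ is also an epimorphism (being a bimorphism, as observed in Section 4), the equality $(\eta_C r)\eta_C=\eta_C=1_{GF(C)}\eta_C$ can be right-cancelled to give $\eta_C r=1_{GF(C)}$, so $\eta_C$ is an isomorphism. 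The converse is immediate from Step 2. Combining Steps 1 and 2 then shows that $\eta_C$ is an injective envelope of $C$.

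The three-fold equivalence follows from the observation that the full subcategory of injective objects of $\mathcal{C}$ coincides, up to isomorphism of objects, with the essential image of $G$: every $G(X)\cong GFG(X)$ is injective by Step 2, while every injective $C$ is isomorphic to $GF(C)$ by Step 3. As $G$ is fully faithful, this yields an equivalence between $\mathcal{X}$ and the category of injective objects in $\mathcal{C}$, which composed with $\bar F$ from Theorem \ref{spectral} delivers $\mathrm{Spec}(\mathcal{C})\sim\mathcal{X}\sim(\text{injectives in }\mathcal{C})$. Part (b) follows from self-duality: the data of a faithful essential localization is self-dual (with the roles of $G$ and $H$ interchanged), so applying (a) to $F^{\mathrm{op}}:\mathcal{C}^{\mathrm{op}}\to\mathcal{X}^{\mathrm{op}}$ under the dual of Condition \ref{split} produces the projective-cover statement. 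The main obstacle is Step 3: a retraction of an essential monomorphism need not in general force an isomorphism, and the argument relies crucially on $\eta_C$ being an epimorphism, which itself uses the faithfulness of $F$. Everything else is bookkeeping with the adjunctions and an application of the earlier theorems.
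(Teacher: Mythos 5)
Your proposal is correct and follows essentially the same route as the paper: the paper's Claim~1 ($G(X)$ is injective, via the adjunction and Condition~\ref{split}) is your Step~2, its Claim~2 ($\eta_C$ essential, via Theorem~\ref{spectral-ess}) is your Step~1, and the final equivalences are obtained in both cases from Theorem~\ref{spectral} together with the identification of the injectives with the essential image of $G$. Your Step~3 merely spells out (using that $\eta_C$ is an epimorphism, rather than its essentiality) a detail the paper leaves implicit, and part~(b) is handled by duality in both.
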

	\begin{proof}
		Since (a) and (b) are dual to each other, we will only prove (a). Since Condition \ref{split} implies Conditions \ref{essential-stable} and \ref{balanced} (by Proposition \ref{comparison}), we can use all the previous theorems.
		
		\textit{Claim 1. For every object $X$ in $\mathcal{X}$, $G(X)$ is an injective object in $\mathcal{C}$.}
		
		Proving this is a standard argument:
		
		Given a monomorphism $f:A\to B$ in $\mathcal{C}$, we have to prove that the map 
		\begin{equation*}
		\mathrm{hom}_\mathcal{C}(f,G(X)):\mathrm{hom}_\mathcal{C}(B,G(X))\to \mathrm{hom}_\mathcal{C}(A,G(X))
		\end{equation*}
		is surjective. But the surjectivity of this map is equivalent to the surjectivity of the map
		\begin{equation*}
		\mathrm{hom}_\mathcal{C}(F(f),X):\mathrm{hom}_\mathcal{C}(F(B),X)\to \mathrm{hom}_\mathcal{C}(F(A),X),
		\end{equation*}
		which follows from the fact $F$ preserves monomorphisms and Condition \ref{split}.
		
		\textit{Claim 2. For every object $C$ in $\mathcal{C}$, the morphism $\eta_C:C\to GF(C)$ is an essential monomorphism in $\mathcal{C}$.}
		
		This follows from Theorem \ref{spectral-ess}.
		
		From Claims 1 and 2 we conclude that, for every object $C$ in $\mathcal{C}$, the morphism $\eta_C:C\to GF(C)$ is an injective envelope of $C$, and, in particular, that $C$ is an injective object if and only if $\eta_C$ is an isomorphism. The last assertion of (a) follows from Theorem \ref{spectral}, and the fact the category of objects $C$ in $\mathcal{C}$ for which $\eta_C$ is an isomorphism is equivalent to the category~$\mathcal{C}$.		
	\end{proof}
	
	Note that Theorem \ref{7.1}(a) does not apply to $\mathcal{C}=\mathsf{Top}$, since $\mathsf{Set}$ does not satisfy Condition \ref{split}, but applies to the category of pointed topological spaces, and \ref{7.1}(b) applies to $\mathsf{Top}$. The same can be said about all categories that are topological over $\mathsf{Set}$. 
	

	\end{document}